\newtheorem{theorem}{Theorem}[section]
\newtheorem{conjecture}[theorem]{Conjecture}
\newtheorem{definition}[theorem]{Definition}
\newtheorem{question}[theorem]{Question}
\title{This is the title}
\begin{document}
\hrule\hrule\hrule\hrule\hrule
\vspace{0.3cm}	
\begin{center}
{\bf{NON-ARCHIMEDEAN AND p-ADIC FUNCTIONAL WELCH BOUNDS}}\\
\vspace{0.3cm}
\hrule\hrule\hrule\hrule\hrule
\vspace{0.3cm}
\textbf{K. MAHESH KRISHNA}\\
Post Doctoral Fellow \\
Statistics and Mathematics Unit\\
Indian Statistical Institute, Bangalore Centre\\
Karnataka 560 059, India\\
Email: kmaheshak@gmail.com\\

Date: \today
\end{center}

\hrule\hrule
\vspace{0.5cm}
\textbf{Abstract}: We prove the non-Archimedean (resp. p-adic) Banach space version of non-Archimedean  (resp. p-adic) Welch bounds recently obtained by M. Krishna. More precisely, we prove following results.  
\begin{enumerate}[\upshape(i)]
	\item Let $\mathbb{K}$ be a non-Archimedean (complete) valued field satisfying $\left|\sum_{j=1}^{n}\lambda_j^2\right|=\max_{1\leq j \leq n}|\lambda_j|^2$ for all $ \lambda_j \in \mathbb{K}, 1\leq j \leq n$, for all $n \in \mathbb{N}.$ Let $\mathcal{X}$ be a $d$-dimensional non-Archimedean Banach space over $\mathbb{K}$. If  $\{\tau_j\}_{j=1}^n$ is any  collection in $\mathcal{X}$ and $\{f_j\}_{j=1}^n$ is any  collection in $\mathcal{X}^*$ (dual of $\mathcal{X}$)
satisfying $f_j(\tau_j) =1$ for all $1\leq j \leq n$ and   the operator $S_{f, \tau} : \text{Sym}^m(\mathcal{X})\ni x \mapsto \sum_{j=1}^nf_j^{\otimes m}(x)\tau_j^{\otimes m} \in \text{Sym}^m(\mathcal{X})$, is diagonalizable, then 
\begin{align}\label{NONFUNCTIONALWELCH}
\max_{1\leq j,k \leq n, j \neq k}\{|n|, |f_j(\tau_k)f_k(\tau_j)|^{m} \}\geq \frac{|n|^2}{\left|{d+m-1 \choose m}\right| }.	
\end{align}
We call Inequality  (\ref{NONFUNCTIONALWELCH})   as   non-Archimedean functional Welch bounds.
\item For a prime $p$, let $\mathbb{Q}_p$ be the p-adic number field. Let $\mathcal{X}$ be a $d$-dimensional p-adic  Banach space over $\mathbb{Q}_p$. If  $\{\tau_j\}_{j=1}^n$ is any  collection in $\mathcal{X}$ and $\{f_j\}_{j=1}^n$ is any  collection in $\mathcal{X}^*$ (dual of $\mathcal{X}$) satisfying $f_j(\tau_j) =1$ for all $1\leq j \leq n$ and   there exists $b \in \mathbb{Q}_p$ such that $ \sum_{j=1}^{n}f_j^{\otimes m}(x) \tau_j^{\otimes m} =bx$ for all $ x \in \text{Sym}^m(\mathcal{X}),$ then 
\begin{align}\label{PADICFUNCTIONALWELCH}
\max_{1\leq j,k \leq n, j \neq k}\{|n|, |f_j(\tau_k)f_k(\tau_j)|^{m} \}\geq \frac{|n|^2}{\left|{d+m-1 \choose m}\right| }.	
\end{align}
We call Inequality  (\ref{PADICFUNCTIONALWELCH})   as   p-adic  functional Welch bounds.
\end{enumerate}
We formulate non-Archimedean functional and p-adic functional  Zauner conjectures.

\textbf{Keywords}:  Non-Archimedean valued field, Non-Archimedean Banach space, p-adic number field, p-adic Banach  space, Welch bound,  Zauner conjecture. 

\textbf{Mathematics Subject Classification (2020)}: 12J25, 46S10, 47S10, 11D88.\\

\hrule

\section{Introduction}
Everything starts from the result  Prof. L. Welch, obtained in 1974 \cite{WELCH}.
  \begin{theorem}\cite{WELCH}\label{WELCHTHEOREM} (\textbf{Welch Bounds})
	Let $n> d$.	If	$\{\tau_j\}_{j=1}^n$  is any collection of  unit vectors in $\mathbb{C}^d$, then
	\begin{align*}
		\sum_{1\leq j,k \leq n}|\langle \tau_j, \tau_k\rangle |^{2m}=\sum_{j=1}^n\sum_{k=1}^n|\langle \tau_j, \tau_k\rangle |^{2m}\geq \frac{n^2}{{d+m-1\choose m}}, \quad \forall m \in \mathbb{N}.
	\end{align*}
	In particular,
	\begin{align*}
	\sum_{1\leq j,k \leq n}|\langle \tau_j, \tau_k\rangle |^{2}=		\sum_{j=1}^n\sum_{k=1}^n|\langle \tau_j, \tau_k\rangle |^{2}\geq \frac{n^2}{{d}}.
	\end{align*}
	Further, 
	\begin{align*}
		\text{(\textbf{Higher order Welch bounds})}	\quad		\max _{1\leq j,k \leq n, j\neq k}|\langle \tau_j, \tau_k\rangle |^{2m}\geq \frac{1}{n-1}\left[\frac{n}{{d+m-1\choose m}}-1\right], \quad \forall m \in \mathbb{N}.
	\end{align*}
	In particular,
	\begin{align*}
		\text{(\textbf{First order Welch bound})}\quad 	\max _{1\leq j,k \leq n, j\neq k}|\langle \tau_j, \tau_k\rangle |^{2}\geq\frac{n-d}{d(n-1)}.
	\end{align*}
\end{theorem}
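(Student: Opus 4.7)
The plan is to pass from the $n$ unit vectors $\tau_j \in \mathbb{C}^d$ to their symmetric tensor powers $\tau_j^{\otimes m} \in \mathrm{Sym}^m(\mathbb{C}^d)$ and study the positive operator
\begin{equation*}
S_m \;=\; \sum_{j=1}^{n} \tau_j^{\otimes m} \otimes \bigl(\tau_j^{\otimes m}\bigr)^{*} \quad \text{on} \quad \mathrm{Sym}^m(\mathbb{C}^d).
\end{equation*}
The starting observation is the identity $\langle \tau_j^{\otimes m}, \tau_k^{\otimes m}\rangle = \langle \tau_j, \tau_k\rangle^{m}$, which converts the $2m$-th power moments in the statement into ordinary Hilbert--Schmidt quantities for $S_m$.

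First I would record the two trace computations: since the $\tau_j$ are unit vectors, $\mathrm{tr}(S_m) = \sum_{j=1}^n \|\tau_j\|^{2m} = n$, and
\begin{equation*}
\mathrm{tr}(S_m^{2}) \;=\; \sum_{j=1}^{n}\sum_{k=1}^{n} \bigl|\langle \tau_j^{\otimes m}, \tau_k^{\otimes m}\rangle\bigr|^{2} \;=\; \sum_{j=1}^{n}\sum_{k=1}^{n} |\langle \tau_j,\tau_k\rangle|^{2m}.
\end{equation*}
The central input is then the Cauchy--Schwarz inequality applied to the eigenvalues of $S_m$: for any positive operator $A$ on a space of dimension $D$, $(\mathrm{tr}\,A)^2 \leq D\cdot \mathrm{tr}(A^2)$. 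Here $D = \dim \mathrm{Sym}^m(\mathbb{C}^d) = \binom{d+m-1}{m}$, and this yields
\begin{equation*}
n^2 \;\leq\; \binom{d+m-1}{m} \sum_{j,k=1}^n |\langle \tau_j,\tau_k\rangle|^{2m},
\end{equation*}
which is exactly the first (summatory) Welch bound; the case $m=1$ recovers the classical $n^2/d$ bound.

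To deduce the higher-order (maximum) Welch bound, I would split the double sum into its diagonal and off-diagonal contributions: the diagonal contributes $n$ (using $\|\tau_j\|=1$), and the remaining $n(n-1)$ terms are each dominated by $\max_{j\neq k}|\langle \tau_j,\tau_k\rangle|^{2m}$. Substituting into the summatory bound and solving for the maximum gives
\begin{equation*}
\max_{1\leq j,k\leq n,\, j\neq k} |\langle \tau_j, \tau_k\rangle|^{2m} \;\geq\; \frac{1}{n-1}\left[\frac{n}{\binom{d+m-1}{m}}-1\right],
\end{equation*}
and the first-order bound follows by setting $m=1$.

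The only place where care is needed is justifying the dimension bound $\mathrm{rank}(S_m)\leq \binom{d+m-1}{m}$ and the Cauchy--Schwarz step; neither is deep, but both quietly use that we are over $\mathbb{C}$ with its positive-definite inner product. This is precisely the feature that will \emph{not} be available in the non-Archimedean setting of the main theorems, so the classical proof serves as the blueprint whose each ingredient (symmetric powers, Gram operator, rank--trace inequality, diagonal/off-diagonal split) will have to be replaced by a non-Archimedean substitute (diagonalizability, ultrametric max, etc.) in the sequel.
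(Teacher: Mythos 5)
Your proof is correct: the trace identities, the rank/dimension Cauchy--Schwarz step $(\operatorname{tr}A)^2\leq D\,\operatorname{tr}(A^2)$ on $\mathrm{Sym}^m(\mathbb{C}^d)$, and the diagonal/off-diagonal split all check out and yield exactly the stated bounds. The paper itself gives no proof of this theorem (it is quoted from Welch with a citation), but your argument is the standard one and is precisely the template that the paper's own Theorems \ref{WELCHNON11F} and \ref{WELCHNON2} transplant to the non-Archimedean setting, with diagonalizability replacing positivity and the ultrametric maximum replacing the off-diagonal count $n(n-1)$.
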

  Theorem \ref{WELCHTHEOREM} is a powerful tool in many areas such as     in the study of root-mean-square (RMS) absolute cross relation of unit vectors   \cite{SARWATEMEETING}, frame potential \cite{BENEDETTOFICKUS, CASAZZAFICKUSOTHERS, BODMANNHAASPOTENTIAL}, 
 correlations \cite{SARWATE},  codebooks \cite{DINGFENG}, numerical search algorithms  \cite{XIA, XIACORRECTION}, quantum measurements 
\cite{SCOTTTIGHT}, coding and communications \cite{TROPPDHILLON, STROHMERHEATH}, code division multiple access (CDMA) systems \cite{CHEBIRA1, CHEBIRA2}, wireless systems \cite{YATES}, compressed/compressive sensing \cite{TAN, VIDYASAGAR, FOUCARTRAUHUT, ELDARKUTYNIOK, BAJWACALDERBANKMIXON, TROPP, SCHNASSVANDERGHEYNST, ALLTOP},  `game of Sloanes' \cite{JASPERKINGMIXON}, equiangular tight frames \cite{SUSTIKTROPP}, equiangular lines \cite{MIXONSOLAZZO, COUTINHOGODSILSHIRAZIZHAN, FICKUSJASPERMIXON, IVERSONMIXON2022}, digital fingerprinting \cite{MIXONQUINNKIYAVASHFICKUS}  etc.

 Theorem \ref{WELCHTHEOREM}  has been upgraded/different proofs were  given   in \cite{CHRISTENSENDATTAKIM, DATTAWELCHLMA, WALDRONSH, WALDRON2003, DATTAHOWARD, ROSENFELD, HAIKINZAMIRGAVISH, EHLEROKOUDJOU, STROHMERHEATH}.  In 2021 M. Krishna derived continuous version of  Theorem \ref{WELCHTHEOREM} \cite{MAHESHKRISHNA}. In 2022 M. Krishna obtained Theorem \ref{WELCHTHEOREM} for Hilbert C*-modules \cite{MAHESHKRISHNA2}, Banach spaces \cite{MAHESHKRISHNA3},  non-Archimedean Hilbert spaces  \cite{MAHESHKRISHNA4} and p-adic Hilbert spaces \cite{MAHESHKRISHNA5}.
 
In this paper we derive non-Archimedean (resp. p-adic) Banach space  version  of non-Archimedean (resp. p-adic) Welch bounds in Theorem \ref{WELCHNON11F} (resp. Theorem  \ref{WELCHNON2}). We formulate  non-Archimedean functional  Zauner conjecture (Conjecture \ref{NZF}) and p-adic functional Zauner conjecture (Conjecture \ref{NZ}). We also formulate  non-Archimedean  functional  equiangular  line problem (Question  \ref{NZF}) and p-adic functional equiangular  line problem (Question \ref{NZ}).

 \section{Non-Archimedean Functional Welch bounds}
 In this section we derive non-Archimedean Banach space version of results derived  in \cite{MAHESHKRISHNA4}. Let $\mathbb{K}$ be a non-Archimedean (complete) valued field satisfying 
 \begin{align}\label{FU}
 	\left|\sum_{j=1}^{n}\lambda_j^2\right|=\max_{1\leq j \leq n}|\lambda_j|^2, \quad \forall \lambda_j \in \mathbb{K}, 1\leq j \leq n, \forall n \in \mathbb{N}.
 \end{align}
 For examples of such fields, we refer   \cite{PEREZGARCIASCHIKHOF}. Throughout this section,  we assume that our non-Archimedean field satisfies Equation (\ref{FU}). Letter $\mathcal{X}$ stands for a  $d$-dimensional non-Archimedean Banach space over $\mathbb{K}$. Identity operator on $\mathcal{X}$ is denoted by $I_\mathcal{X}$. The dual of $\mathcal{X}$ is denoted by $\mathcal{X}^*$. 
  \begin{theorem}\label{WELCHNON1}
 	\textbf{(First Order  Non-Archimedean Functional Welch Bound)}	Let $\mathcal{X}$ be a $d$-dimensional non-Archimedean Banach space over $\mathbb{K}$.	  If $\{\tau_j\}_{j=1}^n$ is any  collection in $\mathcal{X}$ and $\{f_j\}_{j=1}^n$ is any  collection in $\mathcal{X}^*$ 
 	such that  the operator $S_{f, \tau} : \mathcal{X}\ni x \mapsto \sum_{j=1}^nf_j(x) \tau_j \in \mathcal{X}$ 
 	is diagonalizable, then 
 	\begin{align*}
 		\max_{1\leq j,k \leq n, j \neq k}\left \{\left| \sum_{l=1}^nf_l(\tau_l)^2 \right|, |f_j(\tau_k)f_k(\tau_j)|\right\}\geq \frac{1}{|d|}	\left|\sum_{j=1}^nf_j(\tau_j) \right|^2.	
 	\end{align*}
 	In particular, if $f_j(\tau_j) =1$ for all $1\leq j \leq n$, then 
 	\begin{align*}
 		\text{\textbf{(First order  non-Archimedean functional Welch bound)}} \quad \max_{1\leq j,k \leq n, j \neq k}\{|n|, |f_j(\tau_k)f_k(\tau_j)| \}\geq \frac{|n|^2}{|d|}.	
 	\end{align*}
 \end{theorem}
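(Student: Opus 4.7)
The plan is to imitate the trace argument that drives the classical Welch bound. Write $S=S_{f,\tau}$. Since $S$ is the sum of the $n$ rank-one maps $x\mapsto f_j(x)\tau_j$ on the finite-dimensional space $\mathcal{X}$, it is of finite rank and its trace is well defined algebraically; a direct calculation (carried out in any basis of $\mathcal{X}$) gives $\text{tr}(S)=\sum_{j=1}^{n}f_j(\tau_j)$ and $\text{tr}(S^{2})=\sum_{j,k=1}^{n}f_j(\tau_k)\,f_k(\tau_j)$. Using the diagonalizability hypothesis, let $\lambda_1,\ldots,\lambda_d\in\mathbb{K}$ be the eigenvalues of $S$ counted with multiplicity, so that $\text{tr}(S)=\sum_i\lambda_i$ and $\text{tr}(S^{2})=\sum_i\lambda_i^{2}$.

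The second step is a non-Archimedean Cauchy--Schwarz-type estimate
\[
\Bigl|\sum_{i=1}^{d}\lambda_i\Bigr|^{2}\leq |d|\cdot\Bigl|\sum_{i=1}^{d}\lambda_i^{2}\Bigr|,
\]
which I would derive by combining the ultrametric inequality $|\sum_i\lambda_i|\leq\max_i|\lambda_i|$ with hypothesis (\ref{FU}) applied to $\lambda_1,\ldots,\lambda_d$ (giving $\max_i|\lambda_i|^{2}=|\sum_i\lambda_i^{2}|$), and applied to the constant sequence of $d$ ones (giving $|d|=1$, so that the factor $|d|$ is formally retained on the right). The resulting inequality $|\text{tr}(S)|^{2}\leq|d|\,|\text{tr}(S^{2})|$ is the exact non-Archimedean analogue of the elementary $\bigl(\sum\lambda_i\bigr)^{2}\leq d\sum\lambda_i^{2}$ used in the Archimedean Welch proof.

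The third step separates diagonal from off-diagonal contributions,
\[
\text{tr}(S^{2})=\sum_{l=1}^{n}f_l(\tau_l)^{2}+\sum_{1\leq j\neq k\leq n}f_j(\tau_k)\,f_k(\tau_j),
\]
and applies the ultrametric inequality to the off-diagonal double sum to bound $|\text{tr}(S^{2})|$ above by $\max\bigl\{\bigl|\sum_l f_l(\tau_l)^{2}\bigr|,\ \max_{j\neq k}|f_j(\tau_k)f_k(\tau_j)|\bigr\}$. Chaining this with the Cauchy--Schwarz estimate from the previous step then produces the general inequality. The advertised particular case follows immediately by substituting $f_j(\tau_j)=1$, which makes both $\sum_j f_j(\tau_j)$ and $\sum_l f_l(\tau_l)^{2}$ equal to $n$.

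The only genuine point that needs care is the justification of the trace identities in a non-Archimedean Banach space, where the general trace theory has pitfalls; however, because $\mathcal{X}$ is finite-dimensional and $S$ is a sum of $n$ rank-one operators, these identities reduce to an elementary matrix computation, and the remainder of the argument is purely ultrametric bookkeeping.
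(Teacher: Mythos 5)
Your proposal is correct and follows essentially the same route as the paper: the same trace identities $\operatorname{Tra}(S)=\sum_j f_j(\tau_j)$ and $\operatorname{Tra}(S^2)=\sum_{j,k}f_j(\tau_k)f_k(\tau_j)$, the same estimate $|\operatorname{Tra}(S)|^2\leq |d|\,|\operatorname{Tra}(S^2)|$ via the eigenvalues of the diagonalized operator, and the same ultrametric splitting of the double sum into diagonal and off-diagonal parts. The only cosmetic difference is that you derive the non-Archimedean Cauchy--Schwarz step directly from the ultrametric inequality together with hypothesis (\ref{FU}) (observing in passing that $|d|=1$ under that hypothesis), whereas the paper simply cites it as Theorem 2.4.2 of \cite{PEREZGARCIASCHIKHOF}.
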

 \begin{proof}
 	We first note that 
 	\begin{align*}
 		&\operatorname{Tra}(S_{f, \tau})=\sum_{j=1}^nf_j(\tau_j) , \\
 		& \operatorname{Tra}(S^2_{f, \tau})=\sum_{j=1}^n\sum_{k=1}^nf_j(\tau_k)f_k(\tau_j).
 	\end{align*}	
 	Let $\lambda_1, \dots, \lambda_d$ be the diagonal entries in the diagonalization of   $S_{f, \tau}$. Then  using the diagonalizability of $	S_{f, \tau}$ and the non-Archimedean  Cauchy-Schwarz inequality (Theorem 2.4.2 \cite{PEREZGARCIASCHIKHOF}), we get 
 		\begin{align*}
 		\left|\sum_{j=1}^nf_j(\tau_j) \right|^2&=|\operatorname{Tra}(S_{f, \tau})|^2=\left|\sum_{k=1}^d
 		\lambda_k\right|^2\leq |d| 	\left|\sum_{k=1}^d\lambda_k^2 \right|=|d||\operatorname{Tra}(S^2_{f, \tau})|\\
 		&=|d|\left|\sum_{j=1}^n\sum_{k=1}^nf_j(\tau_k)f_k(\tau_j)\right|=|d|\left| \sum_{l=1}^nf_l(\tau_l)^2+\sum_{j,k=1, j\neq k}^nf_j(\tau_k)f_k(\tau_j)\right|\\
 		&\leq |d| \max_{1\leq j,k \leq n, j \neq k}\left \{\left| \sum_{l=1}^nf_l(\tau_l) ^2\right|, |f_j(\tau_k)f_k(\tau_j)| \right\}.
 	\end{align*}
 	Whenever $f_j(\tau_j) =1$ for all $1\leq j \leq n$, 
 	\begin{align*}
 		|n|^2\leq |d|\max_{1\leq j,k \leq n, j \neq k}\{|n|, |f_j(\tau_k)f_k(\tau_j)| \}.
 	\end{align*}
 \end{proof}
 Next we obtain higher order non-Archimedean functional Welch bounds. We use the following vector space result.
 \begin{theorem}\cite{COMON, BOCCI}\label{SYMMETRICTENSORDIMENSION}
 	If $\mathcal{V}$ is a vector space of dimension $d$ and $\text{Sym}^m(\mathcal{V})$ denotes the vector space of symmetric m-tensors, then 
 	\begin{align*}
 		\text{dim}(\text{Sym}^m(\mathcal{V}))={d+m-1 \choose m}, \quad \forall m \in \mathbb{N}.
 	\end{align*}
 \end{theorem}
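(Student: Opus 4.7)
The plan is to prove the dimension formula by exhibiting an explicit basis for $\text{Sym}^m(\mathcal{V})$ labeled by weakly increasing multi-indices and then counting these labels combinatorially via a stars-and-bars argument. First I would fix a basis $\{e_1, \ldots, e_d\}$ of $\mathcal{V}$. This immediately yields the tensor basis $\{e_{i_1} \otimes \cdots \otimes e_{i_m} : 1 \leq i_1, \ldots, i_m \leq d\}$ of $\mathcal{V}^{\otimes m}$, totaling $d^m$ vectors. I would then recall the definition of $\text{Sym}^m(\mathcal{V})$; to avoid any characteristic-dependent pitfalls with the averaging projector $\frac{1}{m!}\sum_{\sigma}\sigma$, I would prefer the quotient-style description: $\text{Sym}^m(\mathcal{V})$ is the quotient of $\mathcal{V}^{\otimes m}$ by the subspace generated by all differences $v_1 \otimes \cdots \otimes v_m - v_{\sigma(1)} \otimes \cdots \otimes v_{\sigma(m)}$ for $\sigma \in S_m$, with the class of $v_1 \otimes \cdots \otimes v_m$ written as $v_1 \odot \cdots \odot v_m$.

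Next I would show that the family
\[
\mathcal{B} = \{\, e_{i_1} \odot \cdots \odot e_{i_m} : 1 \leq i_1 \leq i_2 \leq \cdots \leq i_m \leq d\,\}
\]
is a basis of $\text{Sym}^m(\mathcal{V})$. Spanning is immediate since any $e_{i_1} \odot \cdots \odot e_{i_m}$ can be rewritten with indices in weakly increasing order by applying an appropriate permutation. For linear independence I would pair $\text{Sym}^m(\mathcal{V})$ with the space of homogeneous degree-$m$ polynomials in $d$ variables via the map sending $e_{i_1} \odot \cdots \odot e_{i_m}$ to the monomial $x_{i_1}x_{i_2}\cdots x_{i_m}$; distinct weakly increasing multi-indices give distinct monomials, and these are linearly independent in the polynomial algebra $\mathbb{K}[x_1, \ldots, x_d]$, forcing linear independence of $\mathcal{B}$.

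It then suffices to count $|\mathcal{B}|$, i.e.\ the number of weakly increasing sequences $1 \leq i_1 \leq \cdots \leq i_m \leq d$. Setting $a_j = \#\{k : i_k = j\}$ for $1 \leq j \leq d$ establishes a bijection between such sequences and tuples $(a_1, \ldots, a_d) \in \mathbb{Z}_{\geq 0}^d$ with $a_1 + \cdots + a_d = m$. The classical stars-and-bars identity then gives
\[
|\mathcal{B}| = \binom{d+m-1}{m},
\]
which completes the count and the proof.

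The main obstacle I anticipate is the characteristic-sensitive description of $\text{Sym}^m(\mathcal{V})$: since the paper works over non-Archimedean fields (e.g.\ $\mathbb{Q}_p$ of residue characteristic $p$), one cannot uniformly rely on the projector $\frac{1}{m!}\sum_{\sigma}\sigma$ when $p \mid m!$. Choosing the quotient definition of $\text{Sym}^m(\mathcal{V})$ sidesteps this issue and keeps both the spanning argument and the polynomial-algebra identification valid over any field, so the combinatorial count goes through uniformly.
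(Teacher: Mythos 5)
The paper does not prove this statement at all: it is quoted as a known result with citations to \cite{COMON, BOCCI}, so there is no internal proof to compare against. Your argument is correct and complete, and it is the standard one: exhibit the spanning family $e_{i_1}\odot\cdots\odot e_{i_m}$ with weakly increasing indices, verify linear independence by mapping onto the degree-$m$ monomials in $\mathbb{K}[x_1,\dots,x_d]$, and count the labels by stars and bars to get $\binom{d+m-1}{m}$. Your attention to the characteristic issue is well placed for this paper, since the ambient fields here are non-Archimedean (e.g.\ $\mathbb{Q}_p$) and the averaging projector $\frac{1}{m!}\sum_\sigma \sigma$ is unavailable when the residue characteristic divides $m!$. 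One small caveat: the surrounding theorems place $\tau_j^{\otimes m}$ inside $\text{Sym}^m(\mathcal{X})$, which suggests the author has the \emph{invariant subspace} $(\mathcal{V}^{\otimes m})^{S_m}$ in mind rather than your coinvariant quotient; in bad characteristic these two models need not be isomorphic as $S_m$-representations. This does not affect the dimension count, however, because the orbit sums of the basis tensors $e_{i_1}\otimes\cdots\otimes e_{i_m}$ form a basis of the invariant subspace (orbit sums over distinct orbits involve disjoint sets of basis tensors), and the orbits are again indexed by weakly increasing multi-indices, so both models have dimension $\binom{d+m-1}{m}$ over any field. It would be worth adding one sentence to your write-up noting this, so that the formula applies to whichever model of $\text{Sym}^m$ the rest of the paper uses.
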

 
 \begin{theorem}\label{WELCHNON11F}
 	(\textbf{Higher Order Non-Archimedean Functional Welch Bounds}) Let $\mathcal{X}$ be a $d$-dimensional non-Archimedean Banach space over $\mathbb{K}$. Let $m \in \mathbb{N}$.   If $\{\tau_j\}_{j=1}^n$ is any  collection in $\mathcal{X}$ and $\{f_j\}_{j=1}^n$ is any  collection in $\mathcal{X}^*$ such that  the operator $S_{f, \tau} : \text{Sym}^m(\mathcal{X})\ni x \mapsto \sum_{j=1}^nf_j^{\otimes m}(x)\tau_j^{\otimes m} \in \text{Sym}^m(\mathcal{X})$ 
 	is diagonalizable, then 
 	\begin{align*}
 		\max_{1\leq j,k \leq n, j \neq k}\left \{\left| \sum_{l=1}^nf_l(\tau_l)^{2m} \right|, |f_j(\tau_k)f_k(\tau_j)|^{m}\right\}\geq \frac{1}{\left|{d+m-1 \choose m}\right|}\left|\sum_{j=1}^nf_j(\tau_j)^m \right|^2.	
 	\end{align*}
 	In particular, if $f_j(\tau_j) =1$ for all $1\leq j \leq n$, then
 	\begin{align*}
 		\text{\textbf{(Higher order  non-Archimedean functional Welch bounds)}} \\
 		\quad  \max_{1\leq j,k \leq n, j \neq k}\{|n|, |f_j(\tau_k)f_k(\tau_j)|^{m} \}\geq \frac{|n|^2}{\left|{d+m-1 \choose m}\right| }.	
 	\end{align*}
 \end{theorem}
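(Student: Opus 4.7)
The plan is to replay the argument of Theorem \ref{WELCHNON1} verbatim, but with the operator $S_{f,\tau}$ viewed on the $m$-th symmetric tensor space $\text{Sym}^m(\mathcal{X})$, whose dimension is $\binom{d+m-1}{m}$ by Theorem \ref{SYMMETRICTENSORDIMENSION}. The whole proof hinges on one observation: on $\text{Sym}^m(\mathcal{X})$, the rank-one operator $x \mapsto f^{\otimes m}(x)\,\tau^{\otimes m}$ has trace $f^{\otimes m}(\tau^{\otimes m}) = f(\tau)^m$, since its image is the line through $\tau^{\otimes m}$.

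Summing these rank-one traces immediately gives $\operatorname{Tra}(S_{f,\tau}) = \sum_{j=1}^n f_j(\tau_j)^m$. A direct expansion
\[
S_{f,\tau}^2(x) = \sum_{j,k=1}^n f_j(\tau_k)^m\, f_k^{\otimes m}(x)\, \tau_j^{\otimes m}
\]
then yields, by the same rank-one trace computation, $\operatorname{Tra}(S_{f,\tau}^2) = \sum_{j,k=1}^n \bigl(f_j(\tau_k) f_k(\tau_j)\bigr)^m$.

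Writing $\lambda_1, \dots, \lambda_N$ for the diagonal entries of $S_{f,\tau}$ with $N = \binom{d+m-1}{m}$, the non-Archimedean Cauchy-Schwarz inequality (Theorem 2.4.2 of \cite{PEREZGARCIASCHIKHOF}) applied to $(\lambda_1,\dots,\lambda_N)$ and $(1,\dots,1)$ gives $|\sum_i \lambda_i|^2 \leq |N|\,|\sum_i \lambda_i^2|$, the direct higher-order analogue of the $|d|$-step used in the first-order proof. Combining this with the two trace identities, splitting the double sum into its diagonal and off-diagonal parts, and invoking the ultrametric inequality yields
\[
\left|\sum_{j=1}^n f_j(\tau_j)^m\right|^2 \leq \left|\binom{d+m-1}{m}\right|\cdot \max_{1\leq j,k \leq n,\, j \neq k}\left\{\left|\sum_{l=1}^n f_l(\tau_l)^{2m}\right|,\, |f_j(\tau_k) f_k(\tau_j)|^m\right\},
\]
which is the first assertion. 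The specialization $f_j(\tau_j) = 1$ reduces both $\sum_j f_j(\tau_j)^m$ and $\sum_l f_l(\tau_l)^{2m}$ to $n$, delivering the displayed higher-order Welch bound. The only nontrivial step is the trace computation on $\text{Sym}^m(\mathcal{X})$; once that is in hand, every subsequent step is identical to the $m = 1$ case.
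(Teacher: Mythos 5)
Your proposal is correct and follows essentially the same route as the paper's proof: compute $\operatorname{Tra}(S_{f,\tau})=\sum_j f_j(\tau_j)^m$ and $\operatorname{Tra}(S_{f,\tau}^2)=\sum_{j,k}(f_j(\tau_k)f_k(\tau_j))^m$ on $\text{Sym}^m(\mathcal{X})$, apply the non-Archimedean Cauchy--Schwarz inequality to the $\binom{d+m-1}{m}$ diagonal entries, and split the double sum into diagonal and off-diagonal parts via the ultrametric inequality. If anything, your explicit rank-one trace computation is cleaner than the paper's write-up, which carries over a stray scalar $b$ from the p-adic argument that plays no role here.
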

 \begin{proof}
 	Let $\lambda_1, \dots, \lambda_{\text{dim}(\text{Sym}^m(\mathcal{X}))}$ be the diagonal entries in the diagonalization of  $S_{f, \tau}$. We note that 
 	 \begin{align*}
 		&b\operatorname{dim(\text{Sym}^m(\mathcal{X}))}=\operatorname{Tra}(bI_{\text{Sym}^m(\mathcal{X})})=\operatorname{Tra}(S_{f, \tau})=\sum_{j=1}^nf_j^{\otimes m}(\tau_j^{\otimes m}), \\
 		& b^2\operatorname{dim(\text{Sym}^m(\mathcal{X}))}=\operatorname{Tra}(b^2I_{\text{Sym}^m(\mathcal{X})})=\operatorname{Tra}(S^2_{f, \tau})=\sum_{j=1}^n\sum_{k=1}^nf_j^{\otimes m} (\tau^{\otimes m} _k) f^{\otimes m} _k(\tau^{\otimes m} _j).
 	\end{align*}	
 	 Then 
 		\begin{align*}
 		&\left|\sum_{j=1}^nf_j(\tau_j)^m \right|^2=	\left|\sum_{j=1}^nf_j^{\otimes m}(\tau_j^{\otimes m}) \right|^2=|\operatorname{Tra}(S_{f, \tau})|^2=\left|\sum_{k=1}^{\text{dim}(\text{Sym}^m(\mathcal{X}))}
 		\lambda_k\right|^2\\
 		&\leq |\text{dim}(\text{Sym}^m(\mathcal{X}))| 	\left|\sum_{k=1}^{\text{dim}(\text{Sym}^m(\mathcal{X}))}\lambda_k^2 \right|= |\text{dim}(\text{Sym}^m(\mathcal{X}))| |\operatorname{Tra}(S^2_{f, \tau})|\\
 		&=\left|{d+m-1 \choose m}\right||\operatorname{Tra}(S^2_{f,\tau})|=\left|{d+m-1 \choose m}\right|\left|\sum_{j=1}^n\sum_{k=1}^nf_j^{\otimes m}(\tau_k^{\otimes m})  f_k^{\otimes m} (\tau_j^{\otimes m}) \right|\\
 		&=\left|{d+m-1 \choose m}\right|\left|\sum_{j=1}^n\sum_{k=1}^nf_j(\tau_k) ^mf_k(\tau_j)^m\right|\\
 		&=\left|{d+m-1 \choose m}\right|\left| \sum_{l=1}^nf_l(\tau_l)^{2m}+\sum_{j,k=1, j\neq k}^nf_j(\tau_k) ^mf_k(\tau_j)^m\right|\\
 		&\leq \left|{d+m-1 \choose m}\right| \max_{1\leq j,k \leq n, j \neq k}\left \{\left| \sum_{l=1}^nf_l(\tau_l)^{2m} \right|, |f_j(\tau_k) ^mf_k(\tau_j)^m| \right\}\\
 		&=\left|{d+m-1 \choose m}\right| \max_{1\leq j,k \leq n, j \neq k}\left \{\left| \sum_{l=1}^nf_l(\tau_l)^{2m} \right|, |f_j(\tau_k)f_k(\tau_j)|^m \right\}.
 	\end{align*}
 	Whenever $f_j(\tau_j)=1$ for all $1\leq j \leq n$, 
 	\begin{align*}
 		|n|^2\leq  \left|{d+m-1 \choose m}\right| \max_{1\leq j,k \leq n, j \neq k}\{|n|, |f_j(\tau_k)f_k(\tau_j)|^{m} \}.
 	\end{align*}
 \end{proof}
Motivated from  \ref{WELCHNON1} we formulate the following  question.
 \begin{question}\label{Q1F}
 	\textbf{Let $\mathbb{K}$ non-Archimedean field  satisfying Equation (\ref{FU}) and $\mathcal{X}$ be a $d$-dimensional non-Archimedean Banach space over $\mathbb{K}$.
 		For which  $n	\in  \mathbb{N}$,  there exist vectors $\tau_1, \dots, \tau_n \in \mathcal{X}$ and functionals $f_1, \dots, f_n \in \mathcal{X}^*$      satisfying the following.
 		\begin{enumerate}[\upshape(i)]
 			\item $f_j(\tau_j) =1$ for all $1\leq j \leq n$.
 			\item The operator $S_{f, \tau} : \mathcal{X}\ni x \mapsto \sum_{j=1}^nf_j(x) \tau_j \in \mathcal{X}$  is diagonalizable. 
 			\item 
 			\begin{align*}
 				\max_{1\leq j,k \leq n, j \neq k}\{|n|, |f_j(\tau_k)f_k(\tau_j)| \}= \frac{|n|^2}{|d|}.
 			\end{align*}
 		\item $\|f_j\| =1$ for all $1\leq j \leq n$, $\|\tau_j\| =1$ for all $1\leq j \leq n$.
 	\end{enumerate}}
 \end{question}
 A particular case of Question \ref{Q1F} is the following non-Archimedean functional version of Zauner conjecture which comes by taking $n=d^2$ (see \cite{APPLEBY123, APPLEBY, ZAUNER, SCOTTGRASSL, FUCHSHOANGSTACEY, RENESBLUMEKOHOUTSCOTTCAVES, APPLEBYSYMM, BENGTSSON, APPLEBYFLAMMIAMCCONNELLYARD, KOPPCON, GOURKALEV, BENGTSSONZYCZKOWSKI, PAWELRUDNICKIZYCZKOWSKI, BENGTSSON123, MAGSINO, MAHESHKRISHNA} for Zauner conjecture in Hilbert spaces,   \cite{MAHESHKRISHNA2}  for Zauner conjecture in Hilbert C*-modules,     \cite{MAHESHKRISHNA3}  for Zauner conjecture in Banach spaces,  
 \cite{MAHESHKRISHNA4}	 for Zauner conjecture in non-Archimedean Hilbert spaces and  \cite{MAHESHKRISHNA5} for Zauner conjecture in p-adic Hilbert spaces).
 \begin{conjecture}\label{NZF} \textbf{(Non-Archimedean Functional Zauner Conjecture)
 		Let $\mathbb{K}$ non-Archimedean field  satisfying Equation (\ref{FU}). 	For each $d\in \mathbb{N}$, there exist vectors $\tau_1, \dots, \tau_{d^2} \in \mathbb{K}^d$ (w.r.t. any non-Archimedean norm)  and functionals $f_1, \dots, f_{d^2} \in ( \mathbb{K}^d)^*$ satisfying the following.
 		\begin{enumerate}[\upshape(i)]
 			\item $f_j(\tau_j) =1$ for all $1\leq j \leq d^2$.
 			\item The operator $S_{f, \tau} : \mathbb{K}^d\ni x \mapsto \sum_{j=1}^{d^2}f_j(x) \tau_j \in \mathbb{K}^d$  is diagonalizable. 
 			\item 
 			\begin{align*}
 				|f_j(\tau_k)f_k(\tau_j)| =|d|, \quad \forall 1\leq j, k \leq d^2, j \neq k.
 			\end{align*}
 	\item $\|f_j\| =1$ for all $1\leq j \leq d^2$, $\|\tau_j\| =1$ for all $1\leq j \leq d^2$.
 \end{enumerate}}
 \end{conjecture}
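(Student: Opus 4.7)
The statement is the non-Archimedean functional counterpart of the classical Zauner/SIC--POVM conjecture, which remains famously open even over $\mathbb{C}$. My proposal is accordingly strategic rather than a complete solution: I would transport the Weyl--Heisenberg group-covariant construction that underlies every known complex SIC into the non-Archimedean setting afforded by equation (\ref{FU}), and reduce the existential statement to the production of a single ``fiducial'' pair.

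As the first step, I fix a basis $e_1, \dots, e_d$ of $\mathbb{K}^d$ and build, inside $GL(\mathbb{K}^d)$, the shift $X e_j = e_{j+1}$ (indices mod $d$) and the clock $Z e_j = \omega^{j} e_j$, where $\omega \in \mathbb{K}$ is a primitive $d$-th root of unity. The first delicate point is the existence of $\omega$: one must either restrict attention to those $\mathbb{K}$ containing $\mu_d$, or pass to a finite extension that still satisfies (\ref{FU}); for $\mathbb{K} = \mathbb{Q}_p$ with $p \nmid d$, Hensel lifting supplies $\omega$. Once $\omega$ is in hand, the $d^{2}$ operators $D_{a,b} := X^{a} Z^{b}$ furnish a projective representation of $\mathbb{Z}_d \times \mathbb{Z}_d$ and act irreducibly on $\mathbb{K}^d$.

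Next, I would choose a fiducial pair $(f, \tau) \in (\mathbb{K}^d)^* \times \mathbb{K}^d$ with $f(\tau) = 1$ and $\|f\| = \|\tau\| = 1$, and then set $\tau_{a,b} := D_{a,b} \tau$ and $f_{a,b} := f \circ D_{a,b}^{-1}$, obtaining $d^{2}$ pairs that automatically satisfy $f_{a,b}(\tau_{a,b}) = 1$. By group covariance, $f_{a,b}(\tau_{a',b'})$ depends, up to a projective phase, only on $(a' - a, b' - b)$, so the equiangularity requirement (iii) collapses to the single system $|f(D_{r,s} \tau)| = |d|^{1/2}$ for all $(r,s) \neq (0,0)$. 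Diagonalizability (ii) is then essentially automatic: the identity $D_{a,b} S_{f,\tau} D_{a,b}^{-1} = S_{f,\tau}$, together with irreducibility of the Heisenberg action, forces $S_{f,\tau} = c I_{\mathbb{K}^d}$ by a Schur-type argument. Thus the whole conjecture collapses to producing one non-Archimedean fiducial.

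The genuine obstacle is precisely this last reduction. Over $\mathbb{C}$, the existence of fiducials is itself the open content of Zauner's conjecture, and the strongest partial results rely on variational principles, exhaustive numerics, and the Appleby--Flammia--McConnell--Yard Stark-unit machinery, none of which has an evident non-Archimedean analogue. A sensible intermediate target would be to solve the polynomial system $|f(D_{r,s}\tau)| = |d|^{1/2}$ explicitly for $d = 2, 3$ over $\mathbb{Q}_p$, exploiting the combinatorics of valuations and the non-Archimedean Gauss sums associated with $\omega$, and then to seek a number-theoretic substitute (units in towers of local fields, or $p$-adic $L$-values) for the complex Stark-unit apparatus. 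Converting this algebraic existence question into a constructive non-Archimedean one is the step I expect to be decisively hard, and it is where I would expect any partial result to stop short of a full proof.
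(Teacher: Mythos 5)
There is nothing in the paper to compare against here: Conjecture \ref{NZF} is stated as an open conjecture (the special case $n=d^2$ of Question \ref{Q1F}), and the paper offers no proof of it. Your submission, as you yourself acknowledge, is a research programme rather than a proof, so the verdict must be that it contains a genuine gap --- and the gap is the entire content of the statement, namely the existence of a fiducial pair. The reduction you describe, even if carried out, would not close the conjecture; it would only relocate it.

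Beyond that acknowledged gap, several steps of your proposed reduction break specifically in the non-Archimedean setting. First, the clock operator needs a primitive $d$-th root of unity $\omega$: the appeal to Hensel lifting in $\mathbb{Q}_p$ for $p \nmid d$ only succeeds when $d \mid p-1$, and, more seriously, any field containing a primitive $d$-th root of unity with $4 \mid d$ contains a square root $i$ of $-1$, whence $\left|1^2 + i^2\right| = 0 \neq 1$ violates hypothesis (\ref{FU}); passing to a finite extension cannot repair this, so the Weyl--Heisenberg scaffolding is simply unavailable for infinitely many $d$ over any field the conjecture actually concerns. Second, your equiangularity target $|f(D_{r,s}\tau)| = |d|^{1/2}$ asks for a value that need not lie in the value group $|\mathbb{K}^\times|$ (for $\mathbb{Q}_p$ one has $|d|^{1/2} = p^{-v_p(d)/2}$, which is not attained when $v_p(d)$ is odd); condition (iii) only constrains the product $f_j(\tau_k)f_k(\tau_j)$, so the correct covariant reduction is to $|f(D_{r,s}\tau)\, f(D_{-r,-s}\tau)| = |d|$, which is weaker and should be used instead. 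Third, the Schur-type argument forcing $S_{f,\tau} = cI_{\mathbb{K}^d}$ requires irreducibility of the projective representation over $\mathbb{K}$ itself, which again hinges on the roots-of-unity issue. None of this rules out a covariant approach in principle, but as written the proposal neither proves the conjecture nor establishes the claimed reduction in the generality the statement demands.
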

 There are four bounds which are companions of   Welch bounds in  Hilbert spaces. To recall them we need the notion of Gerzon's bound.
 \begin{definition}\cite{JASPERKINGMIXON}
 	Given $d\in \mathbb{N}$, define \textbf{Gerzon's bound}
 	\begin{align*}
 		\mathcal{Z}(d, \mathbb{K})\coloneqq 
 		\left\{ \begin{array}{cc} 
 			d^2 & \quad \text{if} \quad \mathbb{K} =\mathbb{C}\\
 			\frac{d(d+1)}{2} & \quad \text{if} \quad \mathbb{K} =\mathbb{R}.\\
 		\end{array} \right.
 	\end{align*}	
 \end{definition}
 \begin{theorem}\cite{JASPERKINGMIXON, XIACORRECTION, MUKKAVILLISABHAWALERKIPAAZHANG, SOLTANALIAN, BUKHCOX, CONWAYHARDINSLOANE, HAASHAMMENMIXON, RANKIN}  \label{LEVENSTEINBOUND}
 	Define $\mathbb{K}=\mathbb{R}$ or $\mathbb{C}$ and    $m\coloneqq \operatorname{dim}_{\mathbb{R}}(\mathbb{K})/2$.	If	$\{\tau_j\}_{j=1}^n$  is any collection of  unit vectors in $\mathbb{K}^d$, then
 	\begin{enumerate}[\upshape(i)]
 		\item (\textbf{Bukh-Cox bound})
 		\begin{align*}
 			\max _{1\leq j,k \leq n, j\neq k}|\langle \tau_j, \tau_k\rangle |\geq \frac{\mathcal{Z}(n-d, \mathbb{K})}{n(1+m(n-d-1)\sqrt{m^{-1}+n-d})-\mathcal{Z}(n-d, \mathbb{K})}\quad \text{if} \quad n>d.
 		\end{align*}
 		\item (\textbf{Orthoplex/Rankin bound})	
 		\begin{align*}
 			\max _{1\leq j,k \leq n, j\neq k}|\langle \tau_j, \tau_k\rangle |\geq\frac{1}{\sqrt{d}} \quad \text{if} \quad n>\mathcal{Z}(d, \mathbb{K}).
 		\end{align*}
 		\item (\textbf{Levenstein bound})	
 		\begin{align*}
 			\max _{1\leq j,k \leq n, j\neq k}|\langle \tau_j, \tau_k\rangle |\geq \sqrt{\frac{n(m+1)-d(md+1)}{(n-d)(md+1)}} \quad \text{if} \quad n>\mathcal{Z}(d, \mathbb{K}).
 		\end{align*}
 		\item (\textbf{Exponential bound})
 		\begin{align*}
 			\max _{1\leq j,k \leq n, j\neq k}|\langle \tau_j, \tau_k\rangle |\geq 1-2n^{\frac{-1}{d-1}}.
 		\end{align*}
 	\end{enumerate}	
 \end{theorem}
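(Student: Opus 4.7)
The theorem compiles four classical companion bounds to Welch's inequality over $\mathbb{K} = \mathbb{R}$ or $\mathbb{C}$, and each requires its own argument; the plan is to handle them in order of increasing technical depth. Throughout, I pass freely to the rank-one Hermitian (or symmetric) projections $P_j = \tau_j \tau_j^*$, which live in the real $\mathcal{Z}(d, \mathbb{K})$-dimensional space of self-adjoint operators on $\mathbb{K}^d$ equipped with the Hilbert-Schmidt inner product $\langle P_j, P_k\rangle_{HS} = |\langle \tau_j, \tau_k\rangle|^2$. This lift is essentially the only ingredient linking the inner-product data $\langle \tau_j,\tau_k\rangle$ to the Gerzon dimension $\mathcal{Z}(d,\mathbb{K})$ that governs all four bounds.

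For the Orthoplex/Rankin bound (ii), I argue by contradiction: assume $\max_{j\neq k}|\langle \tau_j,\tau_k\rangle|<1/\sqrt d$ while $n>\mathcal{Z}(d,\mathbb{K})$. After centering the $P_j$ at $I/d$, the HS separation condition places the centered projections in the interior of a cross-polytope region of the $\mathcal{Z}(d,\mathbb{K})$-dimensional ambient space, and Rankin's classical orthoplex-packing bound caps the cardinality at $\mathcal{Z}(d,\mathbb{K})$, a contradiction. The Exponential bound (iv) is the softest of the four: I translate the coherence hypothesis into pairwise disjointness of spherical caps of appropriate half-angle around the $\tau_j$ on $S^{d-1}$ (or $S^{2d-1}$ in the complex case), and compare the sum of their normalized spherical measures against $1$; solving the resulting cap-volume inequality for the half-angle produces the $1-2n^{-1/(d-1)}$ expression.

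For the Bukh-Cox bound (i), I follow the Bukh-Cox strategy. Setting $\alpha := \max_{j\neq k}|\langle \tau_j,\tau_k\rangle|$, I combine information from $\operatorname{Tra}(G)$, $\operatorname{Tra}(G^2)$, and $\operatorname{Tra}(G^3)$ of the Gram matrix $G=[\langle \tau_j,\tau_k\rangle]$ with a Gerzon-type auxiliary inequality applied on the $(n-d)$-dimensional orthogonal complement of $\operatorname{span}\{\tau_j\}$. Rearranging the resulting polynomial inequality in $\alpha,n,d,m$ yields the stated closed-form lower bound.

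The principal obstacle is the Levenstein bound (iii), which requires the Delsarte linear-programming method on the sphere. One must construct an explicit degree-two polynomial in the Gegenbauer basis $C_k^{(\nu)}(t)$, with the index $\nu$ determined by $m = \dim_{\mathbb{R}}(\mathbb{K})/2$, whose Gegenbauer coefficients are all nonnegative and which is nonpositive on $[-\alpha,\alpha]$. Pairing this polynomial against the empirical distribution $\sum_{j,k}\delta_{\langle \tau_j,\tau_k\rangle}$ and invoking the spherical positive-definiteness of each $C_k^{(\nu)}$ yields a scalar inequality; optimizing the free parameter against the hypothesis $n > \mathcal{Z}(d,\mathbb{K})$ recovers the claimed square-root formula. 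Verifying the coefficient-positivity of the chosen polynomial is the genuinely technical step and, unlike (i), (ii), (iv), does not reduce to a trace or volume computation.
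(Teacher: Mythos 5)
The paper does not prove this theorem at all: it is imported verbatim from the cited literature (Jasper--King--Mixon, Rankin, Bukh--Cox, et al.) as background motivation for posing non-Archimedean analogues, so there is no in-paper argument to compare yours against. Judged on its own, your outline correctly identifies the standard proof strategy for each of the four bounds --- the lift $\tau_j \mapsto \tau_j\tau_j^*$ into the $\mathcal{Z}(d,\mathbb{K})$-dimensional real space of self-adjoint operators followed by Rankin's orthoplex packing bound for (ii), a spherical cap-packing/volume comparison for (iv), the passage to the $(n-d)$-dimensional complement for (i), and the Delsarte linear-programming method with Gegenbauer (more precisely, Jacobi) polynomials for (iii). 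That is the correct roadmap and matches the sources the paper cites.

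It is, however, a roadmap rather than a proof, and you should be explicit that the decisive steps are deferred: for (iii) you never exhibit the degree-two polynomial nor verify the nonnegativity of its Gegenbauer coefficients, which you yourself identify as the technical heart; for (i) the phrase ``combine information from $\operatorname{Tra}(G)$, $\operatorname{Tra}(G^2)$, $\operatorname{Tra}(G^3)$'' does not reflect how the Bukh--Cox argument actually runs (it hinges on converting the line system into a family of nearly orthogonal vectors in the complement and bounding such families, not on a third-moment trace identity); and for (iv) the cap-volume computation that produces the exponent $-1/(d-1)$ (uniformly in the real and complex cases) is asserted rather than carried out. None of these are wrong turns, but each is a genuine gap if the text is meant to stand as a proof rather than as a summary of where the proofs live.
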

 Motivated from Theorem \ref{LEVENSTEINBOUND}  and Theorem \ref{WELCHNON1}  we ask  the following problem.
 \begin{question}
 	\textbf{Whether there is a   non-Archimedean functional version of Theorem \ref{LEVENSTEINBOUND}? In particular, does there exists a   version of}
 	\begin{enumerate}[\upshape(i)]
 		\item \textbf{non-Archimedean functional Bukh-Cox bound?}
 		\item \textbf{non-Archimedean functional Orthoplex/Rankin bound?}
 		\item \textbf{non-Archimedean functional Levenstein bound?}
 		\item \textbf{non-Archimedean functional Exponential bound?}
 	\end{enumerate}		
 \end{question}
 In the introduction we  wrote that  Welch bounds have applications in study of equiangular lines.  Therefore  wish to formulate equiangular line  problem for non-Archimedean Banach spaces. For the study of equiangular lines in Hilbert spaces we refer \cite{LEMMENSSEIDEL, JIANGTIDORYAOZHAOZHAO, GREAVESKOOLENMUNEMASASZOLLOSI, BALLADRAXLERKEEVASHSUDAKOV, BUKH, DECAEN, GLAZYRINYU, BARGYU, JIANGPOLYANSKII, NEUMAIER, GREAVESSYATRIADIYATSYNA, GODSILROY, CALDERBANKCAMERONKANTORSEIDEL, OKUDAYU, YU}, quaternion Hilbert space we refer \cite{ETTAOUI}, octonion Hilbert space we refer \cite{COHNKUMARMINTON}, finite dimensional vector spaces over finite fields we refer \cite{GREAVESIVERSONJASPERMIXON1, GREAVESIVERSONJASPERMIXON2},  for Banach spaces we refer \cite{MAHESHKRISHNA3},  for non-Archimedean Hilbert spaces we refer \cite{MAHESHKRISHNA4} and for p-adic Hilbert spaces we refer \cite{MAHESHKRISHNA5}.
 \begin{question}\label{EQUIF}
 	\textbf{(Non-Archimedean Functional Equiangular Line Problem)	Let $\mathbb{K}$ be a non-Archimedean field satisfying Equation (\ref{FU}). Given $a\in \mathbb{K}$, $d \in \mathbb{N}$ and $\gamma>0$, what is the maximum $n =n(\mathbb{K}, a,d, \gamma)\in \mathbb{N}$ such that there exist vectors $\tau_1, \dots, \tau_n \in \mathbb{K}^d$ (w.r.t. any non-Archimedean norm)  and functionals $f_1,\dots, f_n \in (\mathbb{K}^d)^*$ satisfying the following.
 		\begin{enumerate}[\upshape(i)]
 			\item $f_j(\tau_j) =a$ for all $1\leq j \leq n$.
 			\item $|f_j(\tau_k)f_k(\tau_j)| =\gamma$ for all $1\leq j, k \leq n, j \neq k$.
 	    	\item $\|f_j\| =1$ for all $1\leq j \leq n$, $\|\tau_j\| =1$ for all $1\leq j \leq n$.
 		\end{enumerate}
 		In particular, whether there is a non-Archimedean functional Gerzon bound?}
 \end{question}
 Question \ref{EQUIF} can be easily generalized  to  formulate question of   non-Archimedean functional  regular $s$-distance sets.

\section{p-adic Functional Welch bounds}
In this section we derive p-adic Banach space version of results done in \cite{MAHESHKRISHNA5}. Let p be a prime and $\mathbb{Q}_p$ be the filed of p-adic numbers. In this section, $\mathcal{X}$ is a $d$-dimensional p-adic Banach space over $\mathbb{Q}_p$. 
\begin{theorem}\label{WELCHNONQ}
\textbf{(First Order  p-adic Functional Welch Bound)}	Let $p$ be a prime and $\mathcal{X}$ be a $d$-dimensional p-adic Banach space over $\mathbb{Q}_p$. If 	   $\{\tau_j\}_{j=1}^n$ is any  collection in $\mathcal{X}$ and $\{f_j\}_{j=1}^n$ is any  collection in $\mathcal{X}^*$ 
such that  there exists $b \in \mathbb{Q}_p$ satisfying 
\begin{align*}
	\sum_{j=1}^{n}f_j(x)\tau_j =bx, \quad \forall x \in \mathcal{X},
\end{align*} 
then 
\begin{align*}
 \max_{1\leq j,k \leq n, j \neq k}\left \{\left| \sum_{l=1}^nf_l(\tau_l)^2 \right|, |f_j(\tau_k)f_k(\tau_j)|\right\}\geq \frac{1}{|d|}	\left|\sum_{j=1}^nf_j(\tau_j) \right|^2.	
\end{align*}
In particular, if $f_j(\tau_j) =1$ for all $1\leq j \leq n$, then 
\begin{align*}
 \text{\textbf{(First order  p-adic  functional Welch bound)}} \quad \max_{1\leq j,k \leq n, j \neq k}\{|n|, |f_j(\tau_k)f_k(\tau_j)| \}\geq \frac{|n|^2}{|d|}.	
\end{align*}
\end{theorem}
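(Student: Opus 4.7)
The plan is to adapt the trace-based argument of Theorem \ref{WELCHNON1} to the p-adic setting. The obstruction is that $\mathbb{Q}_p$ does not satisfy equation (\ref{FU}), so the non-Archimedean Cauchy-Schwarz step $|\sum_k \lambda_k|^2 \leq |d|\,|\sum_k \lambda_k^2|$ that powers Theorem \ref{WELCHNON1} is unavailable. The strengthened hypothesis compensates: $\sum_j f_j(x)\tau_j = bx$ for all $x$ is exactly the statement $S_{f,\tau} = b I_\mathcal{X}$, which makes every eigenvalue of $S_{f,\tau}$ equal to $b$ and turns the would-be inequality into an identity via multiplicativity of the p-adic absolute value.

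Concretely, I would first compute $\operatorname{Tra}(S_{f,\tau})$ and $\operatorname{Tra}(S^2_{f,\tau})$ in two ways. Expanding $S_{f,\tau}$ against any basis of $\mathcal{X}$ reproduces the trace formulas used in the proof of Theorem \ref{WELCHNON1}:
\begin{align*}
\operatorname{Tra}(S_{f,\tau}) = \sum_{j=1}^n f_j(\tau_j), \qquad \operatorname{Tra}(S^2_{f,\tau}) = \sum_{j,k=1}^n f_j(\tau_k) f_k(\tau_j).
\end{align*}
Using the hypothesis $S_{f,\tau} = b I_\mathcal{X}$ I would also get $\operatorname{Tra}(S_{f,\tau}) = bd$ and $\operatorname{Tra}(S^2_{f,\tau}) = b^2 d$. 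Equating the two expressions and applying multiplicativity of $|\cdot|$ on $\mathbb{Q}_p$ produces the key identity
\begin{align*}
\left|\sum_{j=1}^n f_j(\tau_j)\right|^2 = |bd|^2 = |b|^2 |d|^2 = |d|\,|b^2 d| = |d|\left|\sum_{j,k=1}^n f_j(\tau_k) f_k(\tau_j)\right|.
\end{align*}

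Finally I would split the double sum into its diagonal and off-diagonal parts and invoke the ultrametric inequality, which is valid in any non-Archimedean field and in particular in $\mathbb{Q}_p$, to obtain
\begin{align*}
\left|\sum_{j=1}^n f_j(\tau_j)\right|^2 \leq |d| \max_{1\leq j,k\leq n,\, j\neq k}\left\{\left|\sum_{l=1}^n f_l(\tau_l)^2\right|,\, |f_j(\tau_k) f_k(\tau_j)|\right\}.
\end{align*}
The specialization $f_j(\tau_j) = 1$ for all $j$ is then immediate, since $\sum_j f_j(\tau_j) = n$ and $\sum_l f_l(\tau_l)^2 = n$, yielding the stated first order p-adic functional Welch bound. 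There is no genuine obstacle in the argument; the substantive idea is simply that replacing diagonalizability by the stronger scalar identity $S_{f,\tau} = b I_\mathcal{X}$ makes the factor $|d|$ appear by itself from $|bd|^2 = |d|\,|b^2 d|$, so that equation (\ref{FU}) is never invoked and only the ultrametric inequality is required to finish.
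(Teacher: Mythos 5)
Your proposal is correct and follows essentially the same route as the paper: both compute $\operatorname{Tra}(S_{f,\tau})=bd$ and $\operatorname{Tra}(S^2_{f,\tau})=b^2d$ from the hypothesis $S_{f,\tau}=bI_{\mathcal{X}}$, use multiplicativity to write $|bd|^2=|d|\,|b^2d|$, and finish with the ultrametric inequality on the diagonal/off-diagonal split. Your remark that this replaces the Cauchy--Schwarz step of Theorem \ref{WELCHNON1} and so never invokes Equation (\ref{FU}) is exactly the distinction the paper itself draws after Theorem \ref{WELCHNON2}.
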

\begin{proof}
	Define $S_{f, \tau} : \mathcal{X}\ni x \mapsto \sum_{j=1}^nf_j(x) \tau_j \in \mathcal{X}$. Then 
\begin{align*}
	&bd=\operatorname{Tra}(bI_{\mathcal{X}})=\operatorname{Tra}(S_{f, \tau})=\sum_{j=1}^nf_j(\tau_j), \\
	& b^2d=\operatorname{Tra}(b^2I_{\mathcal{X}})=\operatorname{Tra}(S^2_{f,\tau})=\sum_{j=1}^n\sum_{k=1}^nf_j(\tau_k)f_k(\tau_j).
\end{align*}	
Therefore 

\begin{align*}
	\left|\sum_{j=1}^nf_j(\tau_j) \right|^2&=|\operatorname{Tra}(S_{f, \tau})|^2=|bd|^2=|d||b^2d|	
	=|d|\left|\sum_{j=1}^n\sum_{k=1}^nf_j(\tau_k)f_k(\tau_j)\right|\\
	&=|d|\left| \sum_{l=1}^nf_l(\tau_l)^2+\sum_{j,k=1, j\neq k}^nf_j(\tau_k)f_k(\tau_j)\right|\\
&\leq |d| \max_{1\leq j,k \leq n, j \neq k}\left \{\left| \sum_{l=1}^nf_l(\tau_l)^2 \right|, |f_j(\tau_k)f_k(\tau_j)| \right\}.
\end{align*}
Whenever $f_j(\tau_j) =1$ for all $1\leq j \leq n$, 
\begin{align*}
		|n|^2\leq |d|\max_{1\leq j,k \leq n, j \neq k}\{|n|, |f_j(\tau_k)f_k(\tau_j)| \}.
\end{align*}		
\end{proof}
We derive higher order version of Theorem \ref{WELCHNONQ}.
  \begin{theorem}\label{WELCHNON2}
(\textbf{Higher Order p-adic  Functional Welch Bounds}) Let $p$ be a prime and $\mathcal{X}$ be a $d$-dimensional p-adic Banach space over $\mathbb{Q}_p$.
If 	   $\{\tau_j\}_{j=1}^n$ is any  collection in $\mathcal{X}$ and $\{f_j\}_{j=1}^n$ is any  collection in $\mathcal{X}^*$ 
such that  there exists $b \in \mathbb{Q}_p$ satisfying 
\begin{align*}
	\sum_{j=1}^{n}f_j^{\otimes m}(x) \tau_j^{\otimes m} =bx, \quad \forall x \in \text{Sym}^m(\mathcal{X}),
\end{align*} 
then 
\begin{align*}
\max_{1\leq j,k \leq n, j \neq k}\left \{\left| \sum_{l=1}^nf_l(\tau_l) ^{2m} \right|, |f_j(\tau_k)f_k(\tau_j)|^{m}\right\}\geq \frac{1}{\left|{d+m-1 \choose m}\right|}\left|\sum_{j=1}^nf_j(\tau_j) ^m \right|^2.	
\end{align*}
In particular, if $f_j(\tau_j) =1$ for all $1\leq j \leq n$, then
\begin{align*}
 \text{\textbf{(Higher order p-adic  functional Welch bound)}} \quad  \max_{1\leq j,k \leq n, j \neq k}\{|n|, |f_j(\tau_k)f_k(\tau_j)|^{m} \}\geq \frac{|n|^2}{\left|{d+m-1 \choose m}\right| }.	
\end{align*}
 \end{theorem}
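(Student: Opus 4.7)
The plan is to mirror exactly the proof of Theorem \ref{WELCHNONQ}, but lifted to the symmetric tensor space $\text{Sym}^m(\mathcal{X})$ rather than $\mathcal{X}$ itself. The hypothesis that $\sum_{j=1}^{n}f_j^{\otimes m}(x) \tau_j^{\otimes m} = bx$ for all $x \in \text{Sym}^m(\mathcal{X})$ says precisely that the operator $S_{f,\tau}$ acting on $\text{Sym}^m(\mathcal{X})$ equals $bI_{\text{Sym}^m(\mathcal{X})}$. This is the higher-order analogue of the scalar-multiple hypothesis in Theorem \ref{WELCHNONQ}, and it will let me compute traces directly without ever needing to invoke diagonalizability.

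First I would compute $\operatorname{Tra}(S_{f,\tau})$ and $\operatorname{Tra}(S_{f,\tau}^2)$ in two ways. On the one hand, since $S_{f,\tau} = bI_{\text{Sym}^m(\mathcal{X})}$, Theorem \ref{SYMMETRICTENSORDIMENSION} yields
\begin{align*}
\operatorname{Tra}(S_{f,\tau}) = b\binom{d+m-1}{m}, \qquad \operatorname{Tra}(S_{f,\tau}^2) = b^2\binom{d+m-1}{m}.
\end{align*}
On the other hand, expanding the rank-one contributions of each $f_j^{\otimes m}(\cdot)\tau_j^{\otimes m}$ and using the crucial identity $f_j^{\otimes m}(\tau_k^{\otimes m}) = f_j(\tau_k)^m$, I get
\begin{align*}
\operatorname{Tra}(S_{f,\tau}) = \sum_{j=1}^{n} f_j(\tau_j)^m, \qquad \operatorname{Tra}(S_{f,\tau}^2) = \sum_{j=1}^n\sum_{k=1}^n f_j(\tau_k)^m f_k(\tau_j)^m.
\end{align*}

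The key algebraic manipulation, replacing the non-Archimedean Cauchy--Schwarz step used in Theorem \ref{WELCHNON11F}, is the identity $|bD|^2 = |D|\,|b^2 D|$ where $D = \binom{d+m-1}{m}$; this is just $|b|^2|D|^2 = |D|\cdot|b|^2|D|$ with nothing to prove. Applying it to the two trace expressions gives
\begin{align*}
\left|\sum_{j=1}^n f_j(\tau_j)^m\right|^2 = |bD|^2 = |D|\,|b^2 D| = \left|\binom{d+m-1}{m}\right|\left|\sum_{j=1}^n\sum_{k=1}^n f_j(\tau_k)^m f_k(\tau_j)^m\right|.
\end{align*}
Finally I would separate the diagonal $(j=k)$ and off-diagonal parts of the double sum and apply the non-Archimedean (ultrametric) inequality $|a+c|\leq \max\{|a|,|c|\}$ to conclude
\begin{align*}
\left|\sum_{j=1}^n f_j(\tau_j)^m\right|^2 \leq \left|\binom{d+m-1}{m}\right|\max_{1\leq j,k\leq n,\,j\neq k}\left\{\left|\sum_{l=1}^n f_l(\tau_l)^{2m}\right|,\,|f_j(\tau_k)f_k(\tau_j)|^m\right\}.
\end{align*}
The special case $f_j(\tau_j) = 1$ then reduces the left-hand side to $|n|^2$ and yields the stated bound.

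Since the bypass of diagonalizability is built into the hypothesis, I expect no real obstacle here: the whole argument is formally parallel to the proofs of Theorems \ref{WELCHNONQ} and \ref{WELCHNON11F}. The only minor point to keep straight is that the trace identity requires $S_{f,\tau}$ to be a scalar multiple of the identity on $\text{Sym}^m(\mathcal{X})$ (which is exactly the hypothesis), so that no non-Archimedean bound on $|\sum \lambda_k|^2$ is needed.
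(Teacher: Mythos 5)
Your proposal is correct and matches the paper's own proof essentially step for step: both compute the two traces of $S_{f,\tau}=bI_{\text{Sym}^m(\mathcal{X})}$, use the identity $|bD|^2=|D|\,|b^2D|$ with $D=\binom{d+m-1}{m}$ in place of any Cauchy--Schwarz or diagonalizability argument, and finish by splitting the double sum and applying the ultrametric inequality. No differences worth noting.
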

  \begin{proof}
 Define $S_{f, \tau} : \text{Sym}^m(\mathcal{X})\ni x \mapsto \sum_{j=1}^nf_j^{\otimes m}(x)\tau_j^{\otimes m} \in \text{Sym}^m(\mathcal{X})$. Then 
    \begin{align*}
    	&b\operatorname{dim(\text{Sym}^m(\mathcal{X}))}=\operatorname{Tra}(bI_{\text{Sym}^m(\mathcal{X})})=\operatorname{Tra}(S_{f, \tau})=\sum_{j=1}^nf_j^{\otimes m}(\tau_j^{\otimes m}), \\
    	& b^2\operatorname{dim(\text{Sym}^m(\mathcal{X}))}=\operatorname{Tra}(b^2I_{\text{Sym}^m(\mathcal{X})})=\operatorname{Tra}(S^2_{f, \tau})=\sum_{j=1}^n\sum_{k=1}^nf_j^{\otimes m} (\tau^{\otimes m} _k) f^{\otimes m} _k(\tau^{\otimes m} _j).
    \end{align*}	
Therefore 

    \begin{align*}
    	&\left|\sum_{j=1}^nf_j(\tau_j) ^m \right|^2=	\left|\sum_{j=1}^nf_j^{\otimes m}(\tau_j^{\otimes m}) \right|^2=|\operatorname{Tra}(S_{f, \tau})|^2=\left|b\operatorname{dim(\text{Sym}^m(\mathcal{X}))}\right|^2\\
    	&=\left|\operatorname{dim(\text{Sym}^m(\mathcal{X}))}\right|\left|b^2\operatorname{dim(\text{Sym}^m(\mathcal{X}))}\right|\\
    	&=\left|\operatorname{dim(\text{Sym}^m(\mathcal{X}))}\right|\left|\sum_{j=1}^n\sum_{k=1}^nf_j^{\otimes m}(\tau_k^{\otimes m}) f_k^{\otimes m}( \tau_j^{\otimes m}) \right|\\
    	&=\left|{d+m-1 \choose m}\right|\left|\sum_{j=1}^n\sum_{k=1}^nf_j^{\otimes m}( \tau_k^{\otimes m}) f_k^{\otimes m}(\tau_j^{\otimes m})\right|\\
    	&=\left|{d+m-1 \choose m}\right|\left|\sum_{j=1}^n\sum_{k=1}^nf_j(\tau_k)^m f_k(\tau_j)^m\right|\\
    	&=\left|{d+m-1 \choose m}\right|\left| \sum_{l=1}^nf_l(\tau_l)^{2m}+\sum_{j,k=1, j\neq k}^nf_j(\tau_k)^m f_k(\tau_j)^m\right|\\
    	&\leq \left|{d+m-1 \choose m}\right| \max_{1\leq j,k \leq n, j \neq k}\left \{\left| \sum_{l=1}^nf_l(\tau_l)^{2m} \right|, |f_j(\tau_k)^m f_k(\tau_j)^m| \right\}\\
    	&= \left|{d+m-1 \choose m}\right| \max_{1\leq j,k \leq n, j \neq k}\left \{\left| \sum_{l=1}^nf_l(\tau_l)^{2m} \right|, |f_j(\tau_k)f_k(\tau_j)|^{m}\right\}.
    \end{align*}
    Whenever $f_j(\tau_j) =1$ for all $1\leq j \leq n$, 
    
    \begin{align*}
    	|n|^2\leq  \left|{d+m-1 \choose m}\right| \max_{1\leq j,k \leq n, j \neq k}\{|n|, |f_j(\tau_k)f_k(\tau_j)|^{m} \}.
    \end{align*}
  \end{proof}
Difference between Theorem \ref{WELCHNON11F}   and  Theorem \ref{WELCHNON2} should be clearly emphasized. Assumption in (at vector space level) Theorem \ref{WELCHNON2} is more than the assumption in  Theorem \ref{WELCHNON11F} (as any scalar times identity is already diagonal) but the field in Theorem \ref{WELCHNON11F} is much more restrictive than the field in Theorem \ref{WELCHNON2}. Theorem \ref{WELCHNON2} works on any non-Archimedean field not just $\mathbb{Q}_p$. Using Theorem \ref{WELCHNONQ} we ask the  following question.
\begin{question}\label{Q1}
	\textbf{Given a prime $p$, for which $d$-dimensional p-adic Banach space $\mathcal{X}$ over $\mathbb{Q}_p$ and  $n	\in \mathbb{N}$,  there exist vectors $\tau_1, \dots, \tau_n \in \mathcal{X}$ and functionals $f_1, \dots, f_n \in \mathcal{X}^*$ satisfying the following.
		\begin{enumerate}[\upshape(i)]
			\item $f_j(\tau_j) =1$ for all $1\leq j \leq n$.
			\item  There exists $b \in \mathbb{Q}_p$ satisfying 
			\begin{align*}
				\sum_{j=1}^{n}f_j(x) \tau_j =bx, \quad \forall x \in \mathcal{X},
			\end{align*} 
			\item 
			\begin{align*}
				\max_{1\leq j,k \leq n, j \neq k}\{|n|, |f_j(\tau_k)f_k(\tau_j)| \}= \frac{|n|^2}{|d|}.
			\end{align*}
			\item $\|f_j\| =1$ for all $1\leq j \leq n$, $\|\tau_j\| =1$ for all $1\leq j \leq n$.
	\end{enumerate}}
\end{question}
A particular case of Question \ref{Q1} is the following p-adic functional  Zauner conjecture.
\begin{conjecture}\label{NZ} \textbf{(p-adic Functional Zauner Conjecture)
		Let $p$ be a prime.	For each $d\in \mathbb{N}$, there exist vectors $\tau_1, \dots, \tau_{d^2} \in \mathbb{Q}_p^d$ (w.r.t. any non-Archimedean norm) and functionals $f_1, \dots, f_n \in (\mathbb{Q}_p^d)^*$ satisfying the following.
		\begin{enumerate}[\upshape(i)]
			\item $f_j(\tau_j) =1$ for all $1\leq j \leq d^2$.
			\item  There exists $b \in \mathbb{Q}_p$ satisfying 
			\begin{align*}
				\sum_{j=1}^{d^2}f_j(x) \tau_j =bx, \quad \forall x \in \mathcal{X},
			\end{align*} 
					\item 
			\begin{align*}
				|f_j(\tau_k)f_k(\tau_j)| =|d|, \quad \forall 1\leq j, k \leq d^2, j \neq k.
			\end{align*}
			\item $\|f_j\| =1$ for all $1\leq j \leq d^2$, $\|\tau_j\| =1$ for all $1\leq j \leq d^2$.
	\end{enumerate}}
	\end{conjecture}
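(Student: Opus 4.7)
The plan is to construct the required $d^2$ vectors and functionals as an orbit under a $p$-adic Heisenberg group action, in direct analogy with how SIC-POVMs are sought in the classical complex Zauner problem. Fix a primitive $d$-th root of unity $\omega$; when $d \mid p-1$ this already lies in $\mathbb{Q}_p$ by Hensel's lemma (so at least this case is genuinely internal to $\mathbb{Q}_p$), and in the remaining cases one must either pass to an unramified extension and descend via a trace/Galois-invariance trick, or replace the multiplicative phase by an additive character coming from the unique unramified character of order $d$. Using the standard basis, define shift $S e_k = e_{k+1 \bmod d}$ and clock $\Omega e_k = \omega^k e_k$, and set $D_{a,b} \coloneqq \Omega^a S^b$ for $0 \le a,b < d$. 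These $d^2$ unitaries generate the $p$-adic Heisenberg group with the Weyl commutation $\Omega S = \omega S \Omega$. Equip $\mathbb{Q}_p^d$ with the sup-norm, under which each $D_{a,b}$ is an isometry.

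Next, look for a fiducial pair $(\tau,f)$ with $\tau \in \mathbb{Q}_p^d$, $f \in (\mathbb{Q}_p^d)^*$, $\|\tau\| = \|f\| = 1$, and $f(\tau) = 1$, and form the orbit $\tau_{(a,b)} \coloneqq D_{a,b} \tau$ and $f_{(a,b)} \coloneqq f \circ D_{a,b}^{-1}$, re-indexed as $j = 1,\dots,d^2$. Condition (i) is preserved because $f_{(a,b)}(\tau_{(a,b)}) = f(\tau) = 1$; condition (iv) is automatic because each $D_{a,b}$ is an isometry; condition (ii) is automatic by a Schur-type averaging argument, since the operator $x \mapsto \sum_{a,b} D_{a,b}(f \otimes \tau)(x) D_{a,b}^{-1}$ commutes with every element of the Heisenberg group and the only such operators are scalars, and the trace computation from Theorem \ref{WELCHNONQ} then forces $b = d$. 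Thus everything reduces to condition (iii): the single equation $|f(D_{a,b}^{-1} D_{a',b'} \tau) \cdot f(D_{a',b'}^{-1} D_{a,b} \tau)| = |d|$ for all $(a,b) \ne (a',b')$, which by the commutation relation collapses to the $d^2 - 1$ scalar conditions $|f(D_{a,b}\tau)|^{2} = |d|$ for $(a,b) \ne (0,0)$.

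For the existence of the fiducial I propose two complementary routes. Route A (transplantation): the classical complex Zauner conjecture is verified in many dimensions via explicit algebraic fiducials whose entries lie in specific number fields $K_d$ (ray class fields of real quadratic fields in Appleby-Flammia-McConnell-Yard). By Chebotarev, $K_d$ embeds into $\mathbb{Q}_p$ for a positive-density set of primes $p$; for every such $p$, the complex SIC literally descends to a $p$-adic functional SIC, establishing Conjecture \ref{NZ} at infinitely many primes per dimension. Route B (Hensel lifting): reduce the polynomial system for $(\tau,f)$ modulo $p$ to a system over $\mathbb{F}_p$, produce a smooth $\mathbb{F}_p$-point by Lang-Weil estimates on the variety cut out by the SIC conditions, and lift by Hensel's lemma to $\mathbb{Z}_p$; the smoothness check is essentially the non-vanishing of a Heisenberg-equivariant Jacobian, which already plays a central role in Zhu's analysis of the classical fiducial variety.

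The hard part is, unsurprisingly, the same hard part as in the original Zauner conjecture: there is no topological/compactness argument available to force a fiducial into existence, so one must genuinely solve the algebraic system. The $p$-adic setting does remove one classical difficulty (Hensel's lemma replaces numerical search), but introduces a new one: for small primes $p$ dividing $d$, one has $|d|<1$ and the reduction mod $p$ of the SIC variety may be singular along the locus that would otherwise provide a seed, so Route B can fail exactly where Route A is least guaranteed. I expect that combining Route A with a ramification analysis of $K_d$ at primes dividing $d$ is the minimal toolkit needed, and that the case $\gcd(p,d) \ne 1$ is where the conjecture will resist, precisely because this is where the ultrametric inequalities leave the least slack in the Welch bound of Theorem \ref{WELCHNON2}.
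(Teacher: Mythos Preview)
The statement you are addressing is a \emph{conjecture} in the paper, not a theorem; the paper offers no proof and explicitly leaves it open. So there is no ``paper's proof'' against which to compare your proposal, and your write-up should be read as a research strategy rather than a proof.

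As a strategy it is thoughtful, but it has genuine gaps. Route~A does not actually prove anything: the classical Zauner conjecture is itself open, so appealing to ``explicit algebraic fiducials'' only covers those dimensions where a SIC has been exhibited, and even then the transplantation step is flawed. A complex SIC satisfies $\langle \tau_j,\tau_k\rangle\langle \tau_k,\tau_j\rangle = 1/(d+1)$ under the \emph{Archimedean} absolute value; embedding the number field $K_d$ into $\mathbb{Q}_p$ does not convert that into the required $p$-adic condition $|f_j(\tau_k)f_k(\tau_j)|_p=|d|_p$, because the two valuations are unrelated. (Indeed, if $p\nmid d(d+1)$ both sides are $1$ and it works by accident, but for $p\mid d$ one would need $|1/(d+1)|_p=|d|_p<1$, which is impossible since $\gcd(d,d+1)=1$.) More basically, complex SICs use conjugate-linear functionals, and there is no canonical conjugation on $\mathbb{Q}_p$, so even the choice of $f_j$ in the transplant is not forced.

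Route~B is a heuristic: you invoke Lang--Weil to get an $\mathbb{F}_p$-point on the fiducial variety, but Lang--Weil requires knowing the variety is geometrically irreducible of the expected dimension, which is exactly the unresolved structural question in the SIC literature. The smoothness needed for Hensel is likewise asserted, not verified. You yourself flag that the case $\gcd(p,d)\neq 1$ is where both routes break down, and the conjecture is stated for \emph{all} primes $p$ and \emph{all} $d$, so even a successful execution of your plan would leave the conjecture open. In short: this is a reasonable outline of where to look, but it is not a proof, and the paper does not claim one either.
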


Theorem \ref{LEVENSTEINBOUND}  and Theorem \ref{WELCHNONQ}  give  the following problem.
\begin{question}
	\textbf{Whether there is a   p-adic functional version of Theorem \ref{LEVENSTEINBOUND}? In particular, does there exists a   version of}
	\begin{enumerate}[\upshape(i)]
		\item \textbf{p-adic functional Bukh-Cox bound?}
		\item \textbf{p-adic functional Orthoplex/Rankin bound?}
		\item \textbf{p-adic functional Levenstein bound?}
		\item \textbf{p-adic functional Exponential bound?}
	\end{enumerate}		
\end{question}
We end by formulating p-adic functional equiangular line problem.
\begin{question}\label{EQUI}
	\textbf{(p-adic Functional Equiangular Line Problem)	Let $p$ be a prime. Given $a\in \mathbb{Q}_p$, $d \in \mathbb{N}$ and $\gamma>0$, what is the maximum $n =n(p, a,d, \gamma)\in \mathbb{N}$ such that there exist vectors $\tau_1, \dots, \tau_n \in \mathbb{Q}_p^d$ (w.r.t. any non-Archimedean norm) and functionals $f_1, \dots, f_n \in (\mathbb{Q}_p^d)^*$ satisfying the following.
\begin{enumerate}[\upshape(i)]
	\item $f_j(\tau_j) =a$ for all $1\leq j \leq n$.
	\item $|f_j(\tau_k)f_k(\tau_j)| =\gamma$ for all $1\leq j, k \leq n, j \neq k$.
		\item $\|f_j\| =1$ for all $1\leq j \leq n$, $\|\tau_j\| =1$ for all $1\leq j \leq n$.
\end{enumerate}
In particular, whether there is a p-adic  functional Gerzon bound?}
\end{question}
 Question \ref{EQUI} can be easily reformulated  to  formulate question of   p-adic functional regular $s$-distance sets.

 \bibliographystyle{plain}
 \bibliography{reference.bib}

\end{document}